\documentclass[12pt,reqno]{amsart}
\usepackage{amssymb,latexsym,amsmath}
\usepackage{setspace}
\newtheorem{theorem}{Theorem}
\newtheorem{lemma}{Lemma}

\theoremstyle{definition}
\newtheorem{definition}{Definition}

\textwidth 6.7in \textheight 9.0in \oddsidemargin -0.25in
\evensidemargin -0.25in \topmargin -0.3in

\begin{document}

\title{A prime power equation}
\author{Timothy Redmond, Charles Ryavec}
\date{September 15, 2022}
\maketitle

\begin{abstract}
A real valued function, $G$, is provided whose Fourier transform, $\hat G$, is an entire function that satisfies, $E(s)\zeta(s) = \hat G(\frac{s -\frac{1}{2}}{i})$.  Then $\hat G(\gamma) = 0$ for all nonreal zeros, $\rho = \frac{1}{2} + i \gamma$, of $\zeta(s)$. Combined with Guinand's explicit formula we obtain a prime power equation free of zeta zeros. Using infinitely many translates of G, an infinite system of equations, indexed on the natural numbers, is obained. The solution vector of this system is the vector of values of von Mangoldt's function, $\Lambda(n), n = 1, 2 \cdots$. The entries of the matrix are special values of the fourth power of the Jacobi theta function, $\theta_2(\tau)$.  
\end{abstract}
\section {Explicit Formulas}
The explicit formulas of prime number theory, as discovered by Riemann, and generalized subsequently, provide a balance between two unknown sequences within the framework of a Fourier transform pair, $G$ and $\hat G$. The explicit formula considered in this note is that of Guinand:
\begin{eqnarray*}
\sum_{\rho = \frac{1}{2} + i\gamma}\hat G(\gamma) &=& \hat G(-\frac{i}{2}) + \hat G(\frac{i}{2}) \\
&=& -\sum_{p^m}\frac{logp}{p^{\frac{m}{2}}}\Big(G(m \ logp) + G(-m \ logp)\Big) \\
&- & log \pi \ G(0) \\
&+& \frac{1}{2\pi}\int_{-\infty}^{\infty}Re\Big(\frac{\dot \Gamma}{\Gamma}(\frac{1}{4} + \frac{i t}{2})  \Big)\hat G(t) dt,   
\end{eqnarray*} 
where the series on the left is the sum over nonreal zeros of the Riemann zeta function, and in the series on the right, a sum over rational prime powers. 

Various strategies have been brought to bear on this relationship in order to gain results on zeros. The consideration of the left side,
\begin{equation*}
\sum_{\rho = \frac{1}{2} + i\gamma}\hat G(\gamma),
\end{equation*}
over particular classes of G may be found in [1], [2], articles which provide references and discussion. The first step here is to eliminate the sum,
\begin{equation*}
\sum_{\rho = \frac{1}{2} + i\gamma}\hat G(\gamma),
\end{equation*}
altogether. To this end a function, $G$, is produced whose Fourier transform, $\hat G$, is entire and factors as,
\begin{eqnarray*}
E(s)\zeta(s) = \hat G(\frac{s - \frac{1}{2}}{i}),
\end{eqnarray*} 
so that 
\begin{eqnarray*}
\hat G(\gamma) = 0
\end{eqnarray*} 
for all nonreal zeros, $\rho = \frac{1}{2} +i\gamma$, of $\zeta(s)$. (The $\gamma$ are not assumed to be real.)  In this case an explicit formula,
\begin{eqnarray*}
 \sum_{p^m}\frac{logp}{p^{\frac{m}{2}}}\Big(G(m \ logp) + G(-m \ logp)\Big) &=& V(G) \\
 V(G) &=& \hat G(-\frac{i}{2}) + \hat G(\frac{i}{2}) \notag \\ 
&- & log \pi \ G(0) \notag \\
&+& \frac{1}{4\pi}\int_{-\infty}^{\infty}\Big(\frac{\dot \Gamma}{\Gamma}(\frac{1}{4} + \frac{i t}{2}) + \frac{\dot \Gamma}{\Gamma}(\frac{1}{4} - \frac{it}{2}) \Big)\hat G(t) dt, \notag  
\end{eqnarray*} 
exists, and a sum on prime powers is thereby expressed in terms of quantities with no apparent connection with the zeros of the Riemann zeta function. It is readily checked that the equation,
\begin{eqnarray*}
\sum_{n=1}^{\infty}\frac{\Lambda(n)}{\sqrt{n}}\Big(G(log(x_n) + G(-log(x_n)\Big) = V(G),
\end{eqnarray*}  
does not identify the prime powers; i. e., $x_n = n$. To pinpoint the prime powers as a unique solution, it would be expected that infinitely many equations would be needed. Given the one pair, $G(v)$ and $\hat G(t)$, the obvious candidates for further $G$ are convolutions, the translations, $G(v + c)$, being the  simplest, with transforms $e^{-i c t}\hat G (t)$.  In a coming section, we use the particular translates, $[ c = log m : m = 1, 2, 3, \cdots]$, obtaining an infinite system of equations,
\begin{equation*}
\sum_{n=1}^{\infty} \Lambda(n) f(m, n) = V(m),
\end{equation*}
indexed by the natural numbers, $[m = 1, 2, \cdots]$. The entries of the column vector,
\begin{equation*}
V = [V(m)]_{m =1}^{\infty}, 
\end{equation*}
are the values of the $m$-th translate, $V(G(v + logm))$. The objective would be a unique solution vector,   
\begin{equation*}
\Lambda = [\Lambda(n)]_{n=1}^{\infty},
\end{equation*}
of
\begin{equation*}
T \Lambda  = V,
\end{equation*}
\begin{equation*}
T = [f (m, n)]_{m, n = 1}^{\infty},
\end{equation*}
with $\Lambda(n)$ being the von Mangoldt function. The entries, $f(m, n)$, (in the $m$-th row and $n$-th column) of $T$ are essentially special values of the modular function,
\begin{equation*}
\lambda(\tau) = \Big(\frac{\theta_2(\tau)}{\theta_3(\tau)}\Big)^4.
\end{equation*}
These are numbers from finite algebraic extensions of the rational field, $Q$. It was not shown that $T$ guaranteed a unique solution.  Alternatively, instead of creating equations from the translates of one $G$, a possible area to look into for other $G$ are Galois extensions, $L:Q$,
with
\begin{equation*}
E_{L}(s)\zeta_L(s) = \hat G_L(\frac{s - \frac{1}{2}}{i}),
\end{equation*}
 since $\zeta(s)$ is a factor of all such $\zeta_L$. We checked that this was possible for 15 imaginary quadratic extensions of $Q$, and found that such $\hat G_L(\frac{s - \frac{1}{2}}{i})$ existed. The new ingredient is a whole new set of unrelated $\hat G_L$. 
\section {A Zeta Function Representation }
We define two multiplicative functions, $a(n)$ and $b(n)$.
\begin{definition}Define 
\begin{eqnarray*}
a(n) &=& \sigma_1(n), \qquad (n, 2) = 1 \\
 &=& 2^m \qquad \qquad n = 2^m, 
\end{eqnarray*}
 \end{definition}
and 
\begin{eqnarray*} 
b[n] &=& \sigma_1(n) \qquad (n, 2) = 1 \\
 &=& 0 \qquad \ \ \ \ \ \ n = 2^m \ \ \ m\ge 1.
\end{eqnarray*}
Note that differentiating the theta function, 
\begin{eqnarray*}
\theta_4(\tau) = \sum_{n=-\infty}^{\infty}(-1)^n q^{n^2}\qquad q= e^{i\pi \tau}, \tau = i y,
\end{eqnarray*}
with respect to y, has coefficients given by,
\begin{eqnarray*}
\frac{\dot\theta_4}{\theta_4}(i y) &=& -2\pi\sum_{n=1}^{\infty}a(n)e^{-\pi n y},
\end{eqnarray*}
while the fourth power of the theta function, $\theta_2(\tau)$, has coefficients,
\begin{eqnarray*}
\theta_2^4(\tau) &=&  \Big(\sum_{n= -\infty}^{\infty}q^{(n+\frac{1}{2})^2}\Big)^4 \\
&=&16\sum_{n=1}^{\infty}b(n)q^n.
\end{eqnarray*}
\begin{lemma}
\begin{eqnarray*}
\frac{s(s+1)}{32 \pi^2\sqrt{2}}(2^{\frac{s}{2}} - 2^{-\frac{s}{2}})(2^{\frac{s-1}{2}} - 2^{-\frac{s-1}{2}})\zeta^{*}(s)\zeta^{*}(s+1) &=& \sum_{j=1}^{\infty}\sum_{d|j}a(d)b(\frac{j}{d})\Big(\frac{j}{d^2}\Big)^{\frac{s}{2}}K(s, 2\pi\sqrt{j}), \notag \\
\zeta^{*}(s) &=& \pi^{-\frac{s}{2}}\Gamma(\frac{s}{2})\zeta(s). \notag
\end{eqnarray*}
\end{lemma}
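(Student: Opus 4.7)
My strategy is to rewrite the right-hand side as a Mellin integral of theta functions, apply Jacobi's modular transformation to expose a total derivative, integrate by parts once, and then match the resulting Dirichlet series to the left via Legendre's duplication formula. I expect to work first in a half-plane $\mathrm{Re}(s)>1$ where all interchanges are absolutely justified, and then conclude by analytic continuation.

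First, I use the Macdonald function integral representation
\[
2\Big(\tfrac{\beta}{\alpha}\Big)^{s/2} K_s\!\big(2\sqrt{\alpha\beta}\big) \;=\; \int_0^{\infty} y^{s-1}e^{-\alpha y - \beta/y}\,dy
\]
with $\alpha=\pi d$, $\beta=\pi e$, so that setting $j=de$ produces exactly the prefactor $(j/d^2)^{s/2}$ and Bessel argument $2\pi\sqrt{j}$. Summing over $d,e\ge 1$ against the coefficients $a(d)$ and $b(e)$ converts the right-hand side into
\[
\tfrac12\int_0^\infty y^{s-1}\Big(\sum_d a(d)e^{-\pi d y}\Big)\Big(\sum_e b(e)e^{-\pi e/y}\Big)dy,
\]
which, by the generating-function identities stated immediately before the lemma, is a scalar multiple of $\int_0^\infty y^{s-1}(\dot\theta_4/\theta_4)(iy)\,\theta_2^4(i/y)\,dy$.

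Second, I apply Jacobi's modular transformation $\theta_2(i/y)=\sqrt y\,\theta_4(iy)$ to get $\theta_2^4(i/y)=y^2\theta_4^4(iy)$, and observe that $(\dot\theta_4/\theta_4)\,\theta_4^4 = \tfrac14\tfrac{d}{dy}\theta_4^4(iy)$. The integral therefore reduces to a constant times $\int_0^\infty y^{s+1}\tfrac{d}{dy}\theta_4^4(iy)\,dy$. Because the derivative of a constant vanishes, I may replace $\theta_4^4$ by $\theta_4^4-1$; integration by parts then yields $-(s+1)\int_0^\infty y^s[\theta_4^4(iy)-1]\,dy$, with the boundary terms vanishing (at $y=0$ because $\theta_4^4(iy)=y^{-2}\theta_2^4(i/y)$ is exponentially small, and at $y=\infty$ because $\theta_4^4(iy)-1$ is exponentially small).

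Third, I expand $\theta_4^4(iy)-1 = 8\sum_{n\ge 1}(-1)^n\sigma_1^*(n)e^{-\pi n y}$ with $\sigma_1^*(n)=\sum_{d\mid n,\,4\nmid d}d$ (using $\theta_4(\tau)=\theta_3(\tau+1)$ together with Jacobi's four-square formula) and integrate term by term to obtain $8\Gamma(s+1)\pi^{-(s+1)}\sum_n(-1)^n\sigma_1^*(n)n^{-(s+1)}$. Splitting $n=2^a m$ with $m$ odd (odd $n$ contributing with sign $-$, even $n$ contributing $3\sigma_1(m)$ with sign $+$) collapses the alternating Dirichlet series to $-(1-2^{-s})(1-2^{1-s})\zeta(s)\zeta(s+1)$. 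Combining this with $\Gamma(s+1)=s\Gamma(s)$ and Legendre's duplication $\Gamma(s/2)\Gamma((s+1)/2)=\sqrt\pi\,2^{1-s}\Gamma(s)$, together with the algebraic identity
\[
(2^{s/2}-2^{-s/2})(2^{(s-1)/2}-2^{-(s-1)/2})\cdot 2^{1-s} \;=\; \sqrt 2\,(1-2^{-s})(1-2^{1-s}),
\]
reproduces the left-hand side exactly, including the prefactor $s(s+1)/(32\pi^2\sqrt 2)$: the factor $s+1$ comes from the integration by parts and the extra $s$ from $\Gamma(s+1)=s\Gamma(s)$.

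The main obstacle is bookkeeping rather than any single conceptual step: tracking the sign conventions in $\dot\theta_4/\theta_4$ and in the alternating L-series, justifying Fubini in the appropriate half-plane, and arranging the resulting 2-adic Euler factors into the specific symmetric form $(2^{s/2}-2^{-s/2})(2^{(s-1)/2}-2^{-(s-1)/2})$ that appears on the left. Once the identity is established for $\mathrm{Re}(s)>1$, both sides are meromorphic in $s$ (the right-hand Bessel sum is entire after multiplication by the $s(s+1)(1-2^{-s})(1-2^{1-s})$ prefactor) and agreement extends to all $s$ by analytic continuation.
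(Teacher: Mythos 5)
Your proposal is correct and follows essentially the same route as the paper: both rest on the Bessel integral $\int_0^\infty y^{s-1}e^{-\alpha y-\beta/y}\,dy=2(\beta/\alpha)^{s/2}K_s(2\sqrt{\alpha\beta})$, the modular relation $\theta_2^4(i/y)=y^2\theta_4^4(iy)$, the total-derivative/integration-by-parts step, the Dirichlet series of $\theta_4^4-1$, and Legendre duplication, the paper merely organizing this as two evaluations of $\int_0^\infty y^{2w-1}(\theta_4^4(iy)-1)\,dy$ rather than as your single chain from right to left. Your sign bookkeeping (the expansion $\frac{\dot\theta_4}{\theta_4}(iy)=+2\pi\sum a(n)e^{-\pi n y}$ and the $(-1)^n\sigma_1^{*}(n)$ series) is in fact the consistent one and reproduces the stated constant $\frac{s(s+1)}{32\pi^2\sqrt{2}}$ exactly.
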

Here, $K(s, x)$ is the modified Bessel function of the second kind, usually written, $K_s(x)$.
\begin{proof}
As before, let
\begin{equation*}
\theta_4(\tau) = \sum_{k=-\infty}^{\infty}(-1)^k q^{k^2} \qquad q = e^{i \pi \tau}.
\end{equation*}
Then
\begin{equation*}
\theta_4^4(\tau) -1 = -8 \sum_{k= 1}^{\infty}c(k) q^k,
\end{equation*}
with the multiplicative coefficients,
\begin{eqnarray*}
c(k) &=& \sigma_1(k), \qquad (k, 2) = 1 \\
 &=& -3 \qquad \qquad k = 2^m, \ 1 \le m.
\end{eqnarray*}
With $\tau = i y$, and integrating $y$ over $(0, \infty)$, there results,
\begin{eqnarray*}
\int_0^{\infty}y^{2w-1}(\theta_4^4(e^{-\pi y}) -1)dy &=& -8\pi^{-2w}\Gamma(2w) \sum_{k=1}^{\infty}c(k) k^{-2w} \\
&=& -8\pi^{-2w}\Gamma(2w)\sum_{(k,2) =1}\Big(1 - 3\sum_{m=1}^{\infty}2^{-2mw}\Big)c(k)k^{-2w} \\
&=& -8\pi^{-2w}\Gamma(2w)(1 - 2^{2 - 2w})\zeta(2w - 1)(1 - 2^{-(2w-1)})\zeta(2w) \\
&=& -8\pi^{-(s+1)}\Gamma(s+1)(1 - 2^{1 - s})\zeta(s)(1 - 2^{-s})\zeta(s+1) ,
\end{eqnarray*}
where, in the last line, we put $2w = s + 1$. Returning to the integral in the first line, and integrating by parts, we have,
\begin{eqnarray*}
\int_0^{\infty}y^{2w-1}(\theta_4^4(e^{-\pi y}) -1)dy &=&- \frac{4}{2w}\int_0^{\infty}y^{2w}\frac{\dot\theta_4}{\theta_4}(e^{-\pi y})\theta_4^4(e^{-\pi y})dy \\
&=& -\frac{4}{2w}\int_0^{\infty}y^{2w}\frac{\dot\theta_4}{\theta_4}(e^{-\pi y})y^{-2}\theta_2^4(e^{-\frac{\pi}{ y}})dy \\
&=& \frac{4}{s+1}\int_0^{\infty}y^{s - 1}2\pi \sum_{k=1}^{\infty}a(k)e^{-\pi k y}(16)\sum_{l=1}^{\infty}b(l)e^{-\frac{\pi l}{y}}dy \\
&=& \frac{4(2\pi)(16)}{s+1}\int_0^{\infty}y^{s - 1} \sum_{k=1}^{\infty}a(k)e^{-\pi ky}\sum_{l=1}^{\infty}b(l)e^{-\pi \frac{\pi l}{y}}dy \\
&=& \frac{4(32)\pi}{s+1} \sum_{k=1}^{\infty}\sum_{l=1}^{\infty}a(k)b(l)\int_0^{\infty}y^{s - 1}e^{-\pi(ky+\frac{l}{y})}dy \\
&=& \frac{4(32)\pi)}{s+1} \sum_{j=1}^{\infty}\sum_{d|j}a(d)b(\frac{j}{d})\Big(\frac{j}{d^2}\Big)^{\frac{s}{2}}\int_0^{\infty}y^{s - 1}e^{-\pi(ky+\frac{l}{y})}dy \\
&=& \frac{8(32)\pi}{s+1} \sum_{j=1}^{\infty}\sum_{d|j}a(d)b(\frac{j}{d})\Big(\frac{j}{d^2}\Big)^{\frac{s}{2}} K(s, 2\pi \sqrt{j}) .
\end{eqnarray*}
Equating,
\begin{equation*} 
-8\pi^{-(s+1)}\Gamma(s+1)(1 - 2^{1 - s})\zeta(s)(1 - 2^{-s})\zeta(s+1) = \frac{8(32)\pi}{s+1} \sum_{j=1}^{\infty}\sum_{d|j}a(d)b(\frac{j}{d})\Big(\frac{j}{d^2}\Big)^{\frac{s}{2}} K(s, 2\pi \sqrt{j})
\end{equation*}
and simplifying, the lemma follows. 
\end{proof}
\section {$G(v)$ and $\hat G(t)$}
With $K$ the modified Bessel function of the second kind, then, with $s = \frac{1}{2} + i t$, the Mellin transform is,  
\begin{eqnarray*}
K(\frac{1}{2} + i t, x) &=& \frac{1}{2}\int_0^{\infty}u^{i t} u^{-\frac{1}{2}} exp[-\frac{x}{2}(u + \frac{1}{u})]du. \\
 &=& \frac{1}{2}\int_{-\infty}^{\infty}e^{i t v} e^{\frac{v}{2}}  exp[-\frac{x}{2}(e^v + e^{-v}))]dv \\
&=& \frac{1}{2}\int_{-\infty}^{\infty}e^{i t v}   g[v, x] dv \\ 
&=& \frac{1}{2} \hat g(t, x),
\end{eqnarray*} 
and the resulting pair of transforms collected here for future reference, 
\begin{eqnarray*}
g(v, x) &=& e^{\frac{v}{2}}  exp[-\frac{x}{2}(e^v + e^{-v}))] \\
 \hat g(t, x) &=& \int_0^{\infty}e^{i t v}g(v, x) dv \\
 &=& 2K(\frac{1}{2} + i t, x).  
\end{eqnarray*}
Note that if $\rho = \frac{1}{2} + i\gamma$, then
\begin{eqnarray*}
\hat g(\gamma, x) = 2 K(\rho, x).
\end{eqnarray*}
Next, with $j$ a positive integer, and $d$ a positive integer divisor of $j$, let
\begin{eqnarray*}
T &=& \frac{1}{2} Log\Big(\frac{j}{d^2}\Big).
\end{eqnarray*}
Define
\begin{eqnarray*}
G(v, d, j) &=& g(v - T, 2\pi\sqrt{j}) \\
&=& e^{\frac{v - T}{2}}exp[-\pi\sqrt{j}(e^{v- T} + e^{-(v - T)})] \\
&=& \Big(\frac{j}{d^2}\Big)^{-\frac{1}{4}}e^{\frac{v}{2}}exp[-\pi\sqrt{j}\Big(e^v\frac{d}{\sqrt{j}} + e^{-v}\frac{\sqrt{j}}{d}\Big)] \\
&=& \Big(\frac{j}{d^2}\Big)^{-\frac{1}{4}}e^{\frac{v}{2}}exp[-\pi\Big(e^v d + e^{-v}\frac{j}{d}\Big)].
\end{eqnarray*}
Then,
\begin{eqnarray*}
\hat G[t, d, j] & =&  \int_{-\infty}^{\infty}e^{i t v} g[v - T, 2\pi\sqrt{j}] dv \\
&=&e^{i t T}\hat g[t, 2\pi\sqrt{j}] \\
&=& \Big(\frac{j}{d^2}\Big)^{\frac{i t}{2}}\hat g[t, 2\pi\sqrt{j}], \\
\end{eqnarray*}
and therefore,
\begin{eqnarray*}
\hat G[\gamma, d, j] &=&  \Big(\frac{j}{d^2}\Big)^{\frac{i \gamma}{2}}\hat g[\gamma, 2\pi\sqrt{j}]\\
&=& \Big(\frac{j}{d^2}\Big)^{\frac{i \gamma}{2}} 2K(\rho, x).
\end{eqnarray*}
Finally, define,
\begin{definition} 
\begin{eqnarray*}
G(v) &=& \sum_{j=1}^{\infty}\sum_{d | j}a(d)b(\frac{j}{d}) \Big(\frac{j}{d^2}\Big)^{\frac{1}{4}}G[v, d, j]. 
 \end{eqnarray*}
\end{definition}
Then,
\begin{eqnarray*}
 \hat G(t) &=& \sum_{j=1}^{\infty}\sum_{d | j}a(d)b(\frac{j}{d}) \Big(\frac{j}{d^2}\Big)^{\frac{1}{4}}  \hat G[t, d, j] \\
 &=& \sum_{j=1}^{\infty}\sum_{d | j}a(d)b(\frac{j}{d}) \Big(\frac{j}{d^2}\Big)^{\frac{1}{4}}\Big(\frac{j}{d^2}\Big)^{\frac{i t}{2}}\hat g[t, 2\pi\sqrt{j}] \\
 &=&\sum_{j=1}^{\infty}\sum_{d | j}a(d)b(\frac{j}{d}) \Big(\frac{j}{d^2}\Big)^{\frac{\frac{1}{2} + i t}{2}}\hat g[t, 2\pi\sqrt{j}] \\ 
  &=&\sum_{j=1}^{\infty}\sum_{d | j}a(d)b(\frac{j}{d}) \Big(\frac{j}{d^2}\Big)^{\frac{\frac{1}{2} + i t}{2}}2K(\frac{1}{2} + i t, 2\pi\sqrt{j}] 
 \end{eqnarray*}
\begin{theorem}
With G defined in Definition 2,
\begin{eqnarray*} 
\sum_{\gamma}\hat G(\gamma) &=& \sum_{\gamma}\sum_{j=1}^{\infty}\sum_{d|j}a(d)b(\frac{j}{d})\Big(\frac{j}{d^2}\Big)^{\frac{1}{4} + i\frac{\gamma}{2}}2 K(\frac{1}{2} + i\gamma, 2\pi\sqrt{j}) \\
&=& 0.
\end{eqnarray*}
\end{theorem}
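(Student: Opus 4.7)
The plan is to collapse the definition of $\hat G(t)$ into a closed form via Lemma~1 and thereby expose $\zeta(s)$ as an explicit factor whose zeros kill the summand term by term. My first step is to observe that the series defining $\hat G(t)$ is, summand for summand, exactly twice the series on the right-hand side of Lemma~1 evaluated at $s=\tfrac12+it$: the weight $(j/d^2)^{(1/2+it)/2}$ matches $(j/d^2)^{s/2}$, and $K(\tfrac12+it,\,2\pi\sqrt j)$ matches $K(s,2\pi\sqrt j)$. Since $K_s(2\pi\sqrt j)$ decays exponentially in $j$ uniformly on compact sets in $s$, the double series converges absolutely for every complex $t$, and Lemma~1 applies at $s=\tfrac12+it$ without further work.

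Applying Lemma~1 then gives the closed form
\[
\hat G(t) \;=\; \frac{s(s+1)}{16\pi^2\sqrt 2}\bigl(2^{s/2}-2^{-s/2}\bigr)\bigl(2^{(s-1)/2}-2^{-(s-1)/2}\bigr)\zeta^*(s)\zeta^*(s+1),\qquad s=\tfrac12+it.
\]
Unpacking $\zeta^*(s)=\pi^{-s/2}\Gamma(s/2)\zeta(s)$ exhibits $\zeta(s)$ as an explicit factor of $\hat G(t)$, yielding the promised factorization $\hat G(t)=E(s)\zeta(s)$ with
\[
E(s) \;=\; \frac{s(s+1)\,\pi^{-s/2}\Gamma(s/2)}{16\pi^2\sqrt 2}\bigl(2^{s/2}-2^{-s/2}\bigr)\bigl(2^{(s-1)/2}-2^{-(s-1)/2}\bigr)\zeta^*(s+1).
\]
For any nonreal zero $\rho=\tfrac12+i\gamma$ of $\zeta(s)$, setting $t=\gamma$ then gives $\hat G(\gamma)=E(\rho)\zeta(\rho)=0$, provided $E(\rho)$ is finite, and the stated identity follows after summing over $\rho$.

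The only step requiring genuine care is ruling out an accidental pole of $E(\rho)$ at a nonreal zero $\rho$, which would spoil the term-by-term vanishing. The only candidate singularities of $E(s)$ are the poles of $\Gamma(s/2)$ at $s\in\{0,-2,-4,\ldots\}$ and the poles of $\zeta^*(s+1)$ at $s\in\{-1,0\}$; all of these points are real, so none of them is a nonreal zero of $\zeta$. Hence $E(\rho)$ is finite at every $\rho$ in the sum, each term $\hat G(\gamma)$ vanishes individually, and the identity follows. I expect this pole-versus-zero bookkeeping, rather than any analytic subtlety in the Bessel series, to be the only point of friction.
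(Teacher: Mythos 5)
Your proposal is correct and follows essentially the same route as the paper: the paper's proof simply identifies the series for $\hat G(\gamma)$ as twice the right-hand side of Lemma~1 at $s=\rho$ and concludes that it vanishes because $\zeta^*(\rho)$ is a factor of the left-hand side. Your additional bookkeeping --- checking that the remaining factor $E(s)$ has all its poles on the real axis, so no pole can cancel the zero at a nonreal $\rho$ --- is a detail the paper leaves implicit, and it is handled correctly.
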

\begin{proof}
\begin{eqnarray*}
\hat G(\gamma) &=& \sum_{j=1}^{\infty}\sum_{d|j}a(d)b(\frac{j}{d})\Big(\frac{j}{d^2}\Big)^{\frac{1}{4} + i\frac{\gamma}{2}}2 K(\frac{1}{2} + i\gamma, 2\pi\sqrt{j}) \\
&=& \sum_{j=1}^{\infty}\sum_{d|j}a(d)b(\frac{j}{d})\Big(\frac{j}{d^2}\Big)^{\frac{\rho}{2}} 2 K(\rho, 2\pi\sqrt{j}) \\
&=& 0 
\end{eqnarray*}
by Lemma 1.
\end{proof}
\section {The Prime Power Equation}
\begin{definition} 
\begin{eqnarray*}
\alpha(u) &=& \sum_{k=1}^{\infty} a(k)e^{-\pi k u} \\
\beta(u)&=& \sum_{l=1}^{\infty} b(l)e^{-\pi l u}\qquad u > 0.
 \end{eqnarray*}
 \end{definition}
With $G$ and $\hat G$ defined in section 3, the expressions in the prime power equation,
\begin{eqnarray*}
 \sum_{p^m}\frac{logp}{p^{\frac{m}{2}}}\Big(G(m \ logp) + G(-m \ logp)\Big) &=& \hat G(-\frac{i}{2}) + \hat G(\frac{i}{2}) \\ 
&- & log \pi \ G(0) \\
&+& \frac{1}{4\pi}\int_{-\infty}^{\infty}\Big(\frac{\dot \Gamma}{\Gamma}(\frac{1}{4} + \frac{i t}{2}) + \frac{\dot \Gamma}{\Gamma}(\frac{1}{4} - \frac{it}{2}) \Big)\hat G(t) dt,   
\end{eqnarray*} 
may be recast in terms of $\alpha$ and $\beta$, which is done in the next four lemmas. Continuing,
\begin{lemma} 
\begin{eqnarray*}
\sum_{p^m}\frac{logp}{p^{\frac{m}{2}}}\Big(G(m \ logp) + G(-m \ logp)\Big) = \sum_{n=1}^{\infty}\Lambda (n) \  \alpha( n)\beta(\frac{1}{ n}) +  \sum_{n=1}^{\infty}\frac{\Lambda(n)}{n} \alpha(\frac{1}{n})\beta(n).
 \end{eqnarray*}
\end{lemma}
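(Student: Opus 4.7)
The plan is to first put $G(v)$ into a closed product form, then substitute $v = \pm m\log p$ and recognize the resulting series in terms of $\alpha$ and $\beta$.

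\emph{Step 1: Rewrite $G(v)$ as a product.} Starting from Definition 2 and the formula for $G[v,d,j]$ derived just above it, the factor $(j/d^2)^{1/4}$ cancels the corresponding factor inside $G[v,d,j]$, giving
\begin{equation*}
G(v) = \sum_{j=1}^{\infty}\sum_{d \mid j} a(d)\,b(j/d)\, e^{v/2}\exp\!\Big[-\pi\bigl(e^{v} d + e^{-v}\tfrac{j}{d}\bigr)\Big].
\end{equation*}
Setting $l = j/d$ and reindexing the double sum over all pairs $(d,l) \in \mathbb{N}^{2}$, the exponential factorizes into a term depending only on $d$ and a term depending only on $l$, so
\begin{equation*}
G(v) = e^{v/2}\!\left(\sum_{d=1}^{\infty} a(d) e^{-\pi d e^{v}}\right)\!\!\left(\sum_{l=1}^{\infty} b(l) e^{-\pi l e^{-v}}\right) = e^{v/2}\,\alpha(e^{v})\,\beta(e^{-v}).
\end{equation*}

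\emph{Step 2: Evaluate at $v = \pm m\log p$.} Specializing the closed form at $v = m\log p$ gives $G(m\log p) = p^{m/2}\,\alpha(p^{m})\,\beta(p^{-m})$, and at $v = -m\log p$ gives $G(-m\log p) = p^{-m/2}\,\alpha(p^{-m})\,\beta(p^{m})$. Dividing by $p^{m/2}$ in each case and multiplying by $\log p$ yields
\begin{equation*}
\frac{\log p}{p^{m/2}}G(m\log p) = \log p\,\cdot\alpha(p^{m})\beta(p^{-m}),\qquad \frac{\log p}{p^{m/2}}G(-m\log p) = \frac{\log p}{p^{m}}\alpha(p^{-m})\beta(p^{m}).
\end{equation*}

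\emph{Step 3: Rewrite the prime-power sum as a sum over $n$.} Using $\Lambda(n)=\log p$ for $n=p^{m}$ and $\Lambda(n)=0$ otherwise, summing over all prime powers $p^{m}$ is the same as summing over all $n\ge 1$ weighted by $\Lambda(n)$. Applied to both pieces above and added, this produces exactly
\begin{equation*}
\sum_{n=1}^{\infty}\Lambda(n)\,\alpha(n)\,\beta(1/n) + \sum_{n=1}^{\infty}\frac{\Lambda(n)}{n}\,\alpha(1/n)\,\beta(n),
\end{equation*}
which is the stated identity.

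\emph{Main obstacle.} The computation itself is routine; the only thing that requires care is the reindexing in Step 1 and the interchange of summation order, which is justified by the rapid (doubly exponential) decay of $\exp[-\pi(de^{v}+le^{-v})]$ in $d$ and $l$ for each fixed real $v$, giving absolute convergence of the double series. Everything else is algebraic manipulation.
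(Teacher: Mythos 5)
Your proof is correct and follows essentially the same route as the paper's: the cancellation of $(j/d^2)^{1/4}$ against the factor in $G[v,d,j]$, the reindexing of the divisor sum into a product of the $\alpha$ and $\beta$ series, and the identification of the prime-power sum via $\Lambda$. The only cosmetic difference is that you establish the closed form $G(v)=e^{v/2}\alpha(e^v)\beta(e^{-v})$ for general $v$ before specializing to $v=\pm m\log p$, whereas the paper substitutes first and factors afterward.
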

\begin{proof}
Recalling,
\begin{equation}
g[v, 2\pi\sqrt{j}] = e^{\frac{v}{2}}  exp[-\pi\sqrt{j}(e^v + e^{-v}))], \notag
\end{equation}
then,
\begin{eqnarray*}
G(m logp, d, j) &=& g(m \ logp - T, 2\pi\sqrt{j}) \\ 
&=& e^{\frac{m logp - T}{2}}  exp[-\pi\sqrt{j}(e^{m logp - T} + e^{-(m logp - T)})] \\
&=& p^{\frac{m}{2}}\Big(\frac{d^2}{j}\Big)^{\frac{1}{4}}exp[-\pi\sqrt{j}\Big(p^m \frac{d}{\sqrt{j}} + p^{-m}\frac{\sqrt{j}}{d}\Big)] \\
&=& p^{\frac{m}{2}}\Big(\frac{d^2}{j}\Big)^{\frac{1}{4}}exp[-\pi \Big(p^m d +  \ \frac{j}{p^m d}\Big)],
\end{eqnarray*}
so that
\begin{eqnarray*}
G(m \ logp) &=& p^{\frac{m}{2}}\sum_{j=1}^{\infty}\sum_{d|j} a[d]b[\frac{j}{d}]\Big(\frac{j}{d^2}\Big)^{\frac{1}{4}}\Big(\frac{d^2}{j}\Big)^{\frac{1}{4}}exp[-\pi \Big(p^m d +  \ \frac{j}{p^m d}\Big)] \\ 
&=&  p^{\frac{m}{2}}\sum_{j=1}^{\infty}\sum_{d|j} a[d]b[\frac{j}{d}]exp[-\pi \Big(p^m d +  \ \frac{j}{p^m d}\Big)]  \\
&=& p^{\frac{m}{2}}\sum_{k=1}^{\infty}a[k]exp[-\pi p^m k]\sum_{l=1}^{\infty}b[l]  exp[-\pi p^{-m} l] \\
&=& p^{\frac{m}{2}}\alpha(p^m)\beta(p^{-m}), \\
\sum_{p^m}\frac{logp}{p^{\frac{m}{2}}}G(m \ logp) &=& \sum_{p^m}logp \  \alpha(p^m)\beta(p^{-m}),
\end{eqnarray*}
and
\begin{eqnarray*}
G(-m \ logp , d, j) &=&  g(-m \ logp - T, 2\pi\sqrt{j}) \\ 
&=& e^{\frac{-m logp - T}{2}}  exp[-\pi\sqrt{j}(e^{-m logp - T} + e^{-(-m logp - T)})] \\
&=& p^{-\frac{m}{2}}\Big(\frac{d^2}{j}\Big)^{\frac{1}{4}}exp[-\pi\sqrt{j}\Big(p^{-m} \frac{d}{\sqrt{j}} + p^{m}\frac{\sqrt{j}}{d}\Big)] \\
&=& p^{-\frac{m}{2}}\Big(\frac{d^2}{j}\Big)^{\frac{1}{4}}exp[-\pi \Big(p^{-m} d +  \ p^m\frac{j}{d}\Big)],
\end{eqnarray*}
\begin{eqnarray*}
\sum_{p^m} \frac{log p}{p^{\frac{m}{2}}}G(-m log p) &=& \sum_{p^m}\frac{log p}{{p^m}}\sum_{k=1}^{\infty}a[k]exp[-\pi p^{-m} k]\sum_{l=1}^{\infty}b[l] exp[-\pi p^{m} l]\\
&=& \sum_{p^m}\frac{log p}{{p^m}}\alpha(p^{-m})\beta( p^m).
\end{eqnarray*}
This does Lemma 2. 
\end{proof}
The rest of the terms on the right side of the prime power equation are evaluated in Lemma 3, Lemma 4, and Lemma 5. Continuing with the term, $- log \pi \ G(0)$, in the prime power equation,
\begin{lemma}
\begin{eqnarray*}
\sum_{j=1}^{\infty}\sum_{d|j}a[d] b[\frac{j}{d}]\Big(\frac{j}{d^2}\Big)^{\frac{1}{4}}\Big(- log \pi \ G(0, d, j)\Big) &=& - log \pi \ \sum_{j=1}^{\infty}\sum_{d|j}a[d] b[\frac{j}{d}] e^{-\pi( d+ \frac{j}{d})}\\
&=& -log\pi\sum_{k=1}^{\infty}a[k]exp[-\pi k] \sum_{l=1}^{\infty}b[l]exp[- \pi l] \\
&=& -log\pi \ \alpha(1) \beta(1).
\end{eqnarray*}
\end{lemma}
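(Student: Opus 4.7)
The plan is a direct substitution. From the explicit form derived in Section~3,
$G(v,d,j) = (j/d^2)^{-1/4} e^{v/2} \exp[-\pi(e^v d + e^{-v} j/d)]$,
setting $v = 0$ yields $G(0,d,j) = (j/d^2)^{-1/4} \exp[-\pi(d + j/d)]$. The key observation is that this factor $(j/d^2)^{-1/4}$ exactly cancels the weight $(j/d^2)^{1/4}$ appearing in Definition~2 when $G(0)$ is assembled from its components. Carrying out this cancellation gives the first equality of the lemma,
$-\log \pi \ G(0) = -\log \pi \sum_{j=1}^{\infty} \sum_{d \mid j} a(d) b(j/d) e^{-\pi (d + j/d)}$,
with no surviving $(j/d^2)$-weight.

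Next I would reindex by setting $k = d$ and $l = j/d$, so that the pairs $(j, d)$ with $d \mid j$ are in bijection with arbitrary pairs $(k, l)$ of positive integers via $j = kl$. The summand then factors as $a(k) b(l) e^{-\pi k} e^{-\pi l}$, and because every term is nonnegative, Tonelli permits the unrestricted swap and separation of sums, giving
\[
\sum_{k=1}^{\infty} a(k) e^{-\pi k} \cdot \sum_{l=1}^{\infty} b(l) e^{-\pi l},
\]
which equals $\alpha(1) \beta(1)$ by Definition~3 with $u = 1$. Multiplying by $-\log \pi$ delivers the stated identity.

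There is no substantive obstacle: the lemma is a bookkeeping calculation whose content lies in the alignment of the weight $(j/d^2)^{1/4}$ from Definition~2 against the factor $(j/d^2)^{-1/4}$ produced at $v = 0$ by the translation $T = \tfrac{1}{2} \log(j/d^2)$, followed by a reindexing of divisor pairs as unrestricted pairs. Absolute convergence throughout is immediate from the exponential decay of $e^{-\pi(k+l)}$, so no analytic estimates are needed.
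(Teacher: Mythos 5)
Your proof is correct and follows exactly the route the paper takes: the paper's own justification is the chain of equalities displayed in the lemma itself, obtained by evaluating $G(0,d,j)=(j/d^2)^{-1/4}e^{-\pi(d+j/d)}$, cancelling the $(j/d^2)^{1/4}$ weight from Definition~2, and factoring the divisor sum over $j=kl$ into $\alpha(1)\beta(1)$. Your added remark on Tonelli and absolute convergence is a harmless (and slightly more careful) elaboration of the same computation.
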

\begin{lemma}
\begin{eqnarray*}
\hat G(\frac{i}{2}) &=& \sum_{j=1}^{\infty}\sum_{d|j}a[d] b[\frac{j}{d}]\Big(\frac{j}{d^2}\Big)^{\frac{1}{4}} \hat G(\frac{i}{2}, 2\pi\sqrt{j}) \\
&=& 2\sum_{j=1}^{\infty}\sum_{d|j}a[d] b[\frac{j}{d}]K(0, 2\pi\sqrt{j}) \\
\hat G(-\frac{i}{2}) &=& \sum_{j=1}^{\infty}\sum_{d|j}a[d] b[\frac{j}{d}]\Big(\frac{j}{d^2}\Big)^{\frac{1}{4}} \hat G(-\frac{i}{2}, 2\pi\sqrt{j}) \\
&=& 2\sum_{j=1}^{\infty}\sum_{d|j}a[d] b[\frac{j}{d}]\Big(\frac{j}{d^2}\Big)^{\frac{1}{2}}   K(1, 2\pi\sqrt{j}).
\end{eqnarray*}
\end{lemma}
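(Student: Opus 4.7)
The plan is to obtain both identities by direct substitution of $t=i/2$ and $t=-i/2$ into the series representation
\[
\hat G(t) = \sum_{j=1}^{\infty}\sum_{d|j} a(d)\,b\!\Big(\tfrac{j}{d}\Big)\Big(\tfrac{j}{d^2}\Big)^{\frac{\frac{1}{2}+it}{2}}\, 2K\!\Big(\tfrac{1}{2}+it,\,2\pi\sqrt{j}\Big)
\]
derived at the close of Section~3. Two arithmetic observations do almost all the work: the Bessel index $\tfrac{1}{2}+it$ lands on the integers $0$ and $1$ at $t=i/2$ and $t=-i/2$ respectively, and simultaneously the exponent $(\tfrac{1}{2}+it)/2$ takes the values $0$ and $\tfrac{1}{2}$.

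Setting $t=i/2$, the exponent collapses, so $(j/d^2)^{(1/2+it)/2}=1$, while the Bessel index becomes $0$. This yields at once
\[
\hat G(i/2) = 2\sum_{j=1}^{\infty}\sum_{d|j} a(d)\,b\!\Big(\tfrac{j}{d}\Big)\,K(0,\,2\pi\sqrt{j}).
\]
The intermediate line in the lemma statement expresses the same quantity by retaining the factor $(j/d^2)^{1/4}$ coming from Definition~2 next to $\hat G[i/2,d,j] = (j/d^2)^{-1/4}\,2K(0,2\pi\sqrt{j})$; the two powers of $(j/d^2)^{1/4}$ cancel to give the final formula. An entirely parallel computation at $t=-i/2$ produces the exponent $\tfrac{1}{2}$ and the index $1$, so that the summand becomes $(j/d^2)^{1/2}\,2K(1,2\pi\sqrt{j})$, as claimed.

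The only genuine technicality is to legitimize termwise evaluation of the double series at these specific complex arguments. Since $K(\nu,2\pi\sqrt{j})$ decays roughly like $j^{-1/4}e^{-2\pi\sqrt{j}}$ uniformly for $\nu$ in any bounded set, while $a(d)$ and $b(j/d)$ grow only polynomially in $j$, the double sum converges absolutely and uniformly on a complex strip containing both points $\pm i/2$. Hence the substitution $t=\pm i/2$ commutes with the summation, and the lemma reduces to the two exponent and Bessel-index evaluations above. No additional analytic input is required.
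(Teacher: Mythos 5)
Your proposal is correct and is essentially the argument the paper intends: the paper states this lemma without proof, as it follows immediately from the series representation of $\hat G(t)$ at the end of Section~3 by setting $t=\pm\tfrac{i}{2}$, exactly as you do (index $\tfrac12+it$ becoming $0$ or $1$, exponent becoming $0$ or $\tfrac12$). Your added justification of termwise evaluation via the exponential decay of $K(\nu,2\pi\sqrt{j})$ is a reasonable extra care the paper omits.
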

\begin{lemma}
\begin{eqnarray*}
\frac{1}{2\pi}\int_{-\infty}^{\infty}Re\Big(\frac{\dot \Gamma}{\Gamma}(\frac{1}{4} + \frac{i t}{2})  \Big)\hat G(t) dt &=& \int_0^{\infty}\frac{J(e^v)}{1 - e^{-2v}}dv -\gamma \alpha(1)\beta(1)\qquad \gamma = \mbox{Euler's Constant} \\
J(e^v) &=& 2e^{-2v}\alpha(1)\beta(1) -  e^{-v}\alpha(e^{-v})\beta(e^v) - \alpha(e^v)\beta(e^{-v}). 
\end{eqnarray*}
\end{lemma}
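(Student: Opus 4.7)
The plan is to convert the weight $\mathrm{Re}\,\tfrac{\dot\Gamma}{\Gamma}(\tfrac14+\tfrac{it}{2})$ into a one-parameter $\tau$-integral via Gauss's representation of the digamma function $\psi:=\tfrac{\dot\Gamma}{\Gamma}$,
\begin{equation*}
\psi(z) = -\gamma + \int_0^\infty \frac{e^{-\tau}-e^{-z\tau}}{1-e^{-\tau}}\,d\tau \qquad (\mathrm{Re}\,z>0),
\end{equation*}
and then collapse the resulting double integral by Fourier inversion against $\hat G$. Taking $z=\tfrac14+\tfrac{it}{2}$ and real parts gives
\begin{equation*}
\mathrm{Re}\,\psi\!\left(\tfrac14+\tfrac{it}{2}\right) = -\gamma + \int_0^\infty \frac{e^{-\tau}-e^{-\tau/4}\cos(t\tau/2)}{1-e^{-\tau}}\,d\tau,
\end{equation*}
and the numerator vanishes linearly in $\tau$ at the origin, absorbing the simple pole of $1/(1-e^{-\tau})$.

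Next I would substitute this expression into the left-hand side of the lemma and exchange the orders of $t$- and $\tau$-integration. The constant $-\gamma$ piece contributes $-\gamma\,G(0)$ by Fourier inversion at the origin, and for the remaining piece the identities
\begin{equation*}
\frac{1}{2\pi}\int_{-\infty}^{\infty}\hat G(t)\,dt = G(0), \qquad \frac{1}{2\pi}\int_{-\infty}^{\infty}\cos(t\tau/2)\,\hat G(t)\,dt = \tfrac12\bigl(G(\tau/2)+G(-\tau/2)\bigr)
\end{equation*}
(both special cases of Fourier inversion) apply. Changing variables $\tau=2v$ then yields
\begin{equation*}
\frac{1}{2\pi}\int_{-\infty}^{\infty}\mathrm{Re}\,\psi\!\left(\tfrac14+\tfrac{it}{2}\right)\hat G(t)\,dt = -\gamma G(0) + \int_0^\infty \frac{2e^{-2v}G(0)-e^{-v/2}\bigl(G(v)+G(-v)\bigr)}{1-e^{-2v}}\,dv.
\end{equation*}

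To match the stated form I would reindex the double series defining $G$ by $(k,\ell)=(d,j/d)$, exactly as in the Lemma 2 computation of $G(m\log p)$, to obtain the closed forms
\begin{equation*}
G(v) = e^{v/2}\alpha(e^v)\beta(e^{-v}), \qquad G(-v) = e^{-v/2}\alpha(e^{-v})\beta(e^v), \qquad G(0) = \alpha(1)\beta(1),
\end{equation*}
valid for every real $v$. Then $e^{-v/2}G(v)=\alpha(e^v)\beta(e^{-v})$ and $e^{-v/2}G(-v)=e^{-v}\alpha(e^{-v})\beta(e^v)$, so the numerator in the $v$-integrand is exactly $J(e^v)$, and the lemma drops out. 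The hardest part will be justifying the Fubini interchange: I would need enough uniform-in-$j$ decay of $K(\tfrac12+it,2\pi\sqrt j)$ in $t$ to legitimately swap the $\tau$-integration, the $t$-integration, and the $(j,d)$-summation defining $\hat G(t)$; once that is in place the remaining manipulations are mechanical.
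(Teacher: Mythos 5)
Your proposal is correct, and it reaches the lemma by a genuinely different route from the paper. The paper follows Bombieri: it truncates the Weierstrass product to get $\frac{\dot\Gamma}{\Gamma}(w) = \log N - \sum_{n=0}^{N}\frac{1}{n+w} + O\bigl(\frac{1+|w|}{N}\bigr)$, Fourier-inverts each Poisson-kernel term $\frac{4n+1}{(2n+\frac12)^2+t^2}$ into $e^{-|v|(2n+\frac12)}$, rewrites the divergent $\log N$ as $2\int_0^\infty e^{-2v}\frac{1-e^{-2vN}}{1-e^{-2v}}\,dv - \gamma + O(\frac1N)$ so that the two divergences cancel inside a single $v$-integral, and then lets $N\to\infty$; the kernel $\frac{e^{-v/2}}{1-e^{-2v}}$ emerges as the summed geometric series $\sum_{n\ge0}e^{-(2n+\frac12)v}$. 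Your use of Gauss's representation $\psi(z) = -\gamma + \int_0^\infty\frac{e^{-\tau}-e^{-z\tau}}{1-e^{-\tau}}\,d\tau$ delivers the same kernel in one stroke, with the $-\gamma$ appearing for free and no $N$-regularization to manage; the price is that you must check absolute convergence of the $\tau$-integral (fine, since $\mathrm{Re}\,z = \frac14 > 0$ and the numerator vanishes to first order at $\tau=0$, as you note) and justify one Fubini interchange against $\hat G$, for which the rapid decay of $K_{\frac12+it}(2\pi\sqrt{j})$ in $t$ suffices --- a point the paper itself passes over in its own interchanges. Your closed forms $G(\pm v) = e^{\pm v/2}\alpha(e^{\pm v})\beta(e^{\mp v})$ and $G(0)=\alpha(1)\beta(1)$ are exactly how the paper identifies the bracketed integrand with $J(e^v)$, so the two arguments converge to the identical final display. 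On balance your derivation is the cleaner of the two for this specific lemma; the paper's truncated form is the one that generalizes to settings where the Archimedean term and the sum over zeros must be regularized in tandem.
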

\begin{proof}
We follow Bombieri ([1], pp. 188-190). From
\begin{eqnarray*}
\Gamma[w] = \frac{1}{w}\prod_{n=1}^{\infty}\frac{(1+\frac{1}{n})^w}{1 + \frac{w}{n}},
\end{eqnarray*}
there is
\begin{eqnarray*}
\frac{\dot\Gamma}{\Gamma}(w)  &=& -\frac{1}{w} + \sum_{n=1}^{N}\Big(log(1 + \frac{1}{n})  - \frac{1}{(n + w)}\Big) +  \sum_{n > N}\Big(log(1 + \frac{1}{n})  - \frac{1}{(n + w)}\Big) \\
&=&  -\frac{1}{w} + log N - \sum_{n=1}^{N}\frac{1}{n + w} + \sum_{n > N}\Big (\frac{1}{n} - \frac{1}{n + w}\Big) + O\Big(\frac{1}{N}\Big) \\
 &=& log N - \sum_{n=0}^{N}\frac{1}{n + w} + O\Big(\frac{1 + |w|}{N}\Big) 
 \end{eqnarray*}
uniformly for $Re[w] \ge \frac{1}{4}$.
Then
\begin{eqnarray*}
Re\Big (\frac{\dot \Gamma}{\Gamma}(\frac{1}{4} + \frac{i t}{2}) \Big) = log N - \sum_{n=0}^{N} \frac{4n + 1}{(2 n + \frac{1}{2})^2 + t^2} +  O\Big(\frac{1 + |t|}{N}\Big).
\end{eqnarray*}
Thus, 
\begin{eqnarray}
\frac{1}{2\pi}\int_{-\infty}^{\infty}Re\Big(\frac{\dot \Gamma}{\Gamma}(\frac{1}{4} + \frac{i t}{2})  \Big)\hat G(t) dt &=& log N  \frac{1}{2\pi}\int_{-\infty}^{\infty}\hat G(t) dt \\
&-& \sum_{n=0}^{N} \frac{1}{2\pi}\int_{-\infty}^{\infty}\frac{4n + 1}{(2 n + \frac{1}{2})^2 + t^2}\hat G(t) dt \\
&+& O(\frac{1}{N}) \notag. 
\end{eqnarray}                                               
 As for (1), the factor, $log N$, is written as,
 \begin{eqnarray*}
 log N &=& \sum_{n=1}^N \frac{1}{n} - \gamma + O(\frac{1}{N}) \\
 &=& 2\int_0^{\infty}e^{-2v}\Big(\frac{1- e^{-2vN}}{1 - e^{-2v}}\Big)dv - \gamma + O(\frac{1}{N}). 
 \end{eqnarray*} 
 So (1) becomes,
 \begin{eqnarray}
 log N  \frac{1}{2\pi}\int_{-\infty}^{\infty}\hat G(t) dt &=& 2\int_0^{\infty}e^{-2v}\Big(\frac{1- e^{-2vN}}{1 - e^{-2v}}\Big) G(0) dv - \gamma G(0) + O(\frac{1}{N}) 
 \end{eqnarray}
As for (2), write
 \begin{eqnarray}
\frac{4n + 1}{(2 n + \frac{1}{2})^2 + t^2} = \frac{1}{i}\Big(\frac{1}{t - i (2n + \frac{1}{2})} - \frac{1}{t + i (2n + \frac{1}{2})}\Big),
 \end{eqnarray}
and note that,
 \begin{eqnarray}
\frac{1}{2\pi i}\int_{-\infty}^{\infty}e^{i t v}\Big[\frac{1}{t - i (2n + \frac{1}{2})} - \frac{1}{t + i (2n + \frac{1}{2})}\Big]d t = \text{Min}[e^{v (2n+1)}, e^{-v(2n+1)}].
 \end{eqnarray}  
Write (2) as a double integral, 
 \begin{eqnarray*}
 -\frac{1}{2\pi}\int_{-\infty}^{\infty}\frac{4n + 1}{(2 n + \frac{1}{2})^2 + t^2}\hat G(t)dt = -\frac{1}{2\pi}\int_{-\infty}^{\infty}\frac{4n + 1}{(2 n + \frac{1}{2})^2 + t^2}\int_{-\infty}^{\infty}e^{i t v}G(v) dv dt,
\end{eqnarray*}
substitute (4), interchange the order of integration, and use (5) to obtain for (2) the sums,
 \begin{eqnarray*}
- \sum_{n=0}^{N} \int_{-\infty}^{0}e^{v (2n+\frac{1}{2})}G(v) dv - \sum_{n=0}^{N} \int_{0}^{\infty}e^{-v (2n+\frac{1}{2})}G(v) dv, 
\end{eqnarray*} 
which equals, 
 \begin{eqnarray}
- \sum_{n=0}^{N} \int_{0}^{\infty}e^{v (2n+\frac{1}{2})}G(-v) dv - \sum_{n=0}^{N} \int_{0}^{\infty}e^{-v (2n+\frac{1}{2})}G(v) dv. 
 \end{eqnarray}
Combining (3) and (6), and letting  $N \longrightarrow \infty$, then,     
 \begin{eqnarray*}
\frac{1}{2\pi}\int_{-\infty}^{\infty}Re\Big(\frac{\dot \Gamma}{\Gamma}(\frac{1}{4} + \frac{i t}{2})  \Big)\hat G(t) dt = 
\end{eqnarray*}   
 \begin{eqnarray*}
\int_0^{\infty}\Big[2e^{-2v}G(0) - e^{-\frac{v}{2}}(G(-v)  + G(v) )\Big]\frac{dv}{1-e^{-2v}} 
-\gamma G(0).
\end{eqnarray*} 
The expression in the square brackets is
 \begin{eqnarray*}
 J(e^v) = 2e^{-2v}\alpha(1)\beta(1) -  e^{-v}\alpha(e^{-v})\beta(e^v) - \alpha(e^v)\beta(e^{-v}),
\end{eqnarray*} 
so that, as stated in the lemma,      
 \begin{eqnarray*}
\frac{1}{2\pi}\int_{-\infty}^{\infty}Re\Big(\frac{\dot \Gamma}{\Gamma}(\frac{1}{4} + \frac{i t}{2})  \Big)\hat G(t) dt = \int_0^{\infty}\frac{J(e^v)}{1 - e^{-2v}}dv -\gamma \alpha(1)\beta(1).
\end{eqnarray*}  
Changing variables, 
 \begin{eqnarray*} 
v = log u \qquad dv = \frac{du}{u},
\end{eqnarray*}
we have,
\begin{eqnarray*} 
\int_0^{\infty}\frac{J(e^v)}{1 - e^{-2v}}dv &=& \int_1^{\infty}\frac{2u^{-2}\alpha(1)\beta(1) -  u^{-1}\alpha(u^{-1})\beta(u) - \alpha(u)\beta(u^{-1})}{1 - u^{-2}}\frac{du}{u} \\
&=& \int_0^1\frac{2u^2\alpha(1)\beta(1) -  u\alpha(u)\beta(u^{-1}) - \alpha(u^{-1})\beta(u)}{1 - u^2}\frac{du}{u}  
\end{eqnarray*}
\end{proof}
The term, 
\begin{eqnarray*}
V(\infty) &=&log \pi \ G(0) \notag \\
 &+&\frac{1}{4\pi}\int_{-\infty}^{\infty}\Big(\frac{\dot \Gamma}{\Gamma}(\frac{1}{4} + \frac{i t}{2}) + \frac{\dot \Gamma}{\Gamma}(\frac{1}{4} - \frac{it}{2}) \Big)\hat G(t) dt,
\end{eqnarray*}
on the right side of the prime power equation comes from the Archimedian factor in $Z(s)$, a component of the functional equation on which the Guinand formula rests, and might as well be labelled that way. It plays the role of a known term in the equations, but a difficult expression, containing the values of $\beta(e^{-\pi y})$ in an integral. An effort was made to evaluate it as a discrete expression of special values of $\beta$.
\section {Introducing a Variable $x > 0$}
Continuous translates, $G(v + log x), x > 0,$ of $G$ are introduced,   from which a discrete infinite collection, $x = m, m = 1, 2,\cdots$ is selected to create an infinite set of prime power equations. For $x > 0$, it is readily verified that,
\begin{eqnarray*}
G(v + log x) = \sqrt{x}e^{\frac{v}{2}}\Big(\sum_{k=1}^{\infty} a(k)e^{-\pi k x e^v}\Big) \Big(\sum_{l=1}^{\infty} b(l)e^{-\frac{\pi l}{x} e^{-v}}\Big),
\end{eqnarray*} 
so that
\begin{eqnarray*}
G(m log p + log x) &=&  \sqrt{x} p^{\frac{m}{2}}\Big(\sum_{k=1}^{\infty} a(k)e^{-\pi k x p^m}\Big) \Big(\sum_{l=1}^{\infty} b(l)e^{-\frac{\pi l}{xp^m}}\Big) \\
&=&  \sqrt{x}p^{\frac{m}{2}}\alpha(x p^m)\beta(\frac{1}{xp^m}) \\
G(-m log p + log x) &=&  \sqrt{x}p^{-\frac{m}{2}}\Big(\sum_{k=1}^{\infty} a(k)e^{-\pi k x p^{-m}}\Big) \Big(\sum_{l=1}^{\infty} b(l)e^{-\frac{ \pi l p^m}{x}}\Big) \\
&=&  \sqrt{x}p^{-\frac{m}{2}}\alpha(\frac{x}{ p^m}) \beta(\frac{p^m}{x}). 
\end{eqnarray*}
With the variable, $x$, in all the terms, the modified prime power equation is now,
\begin{lemma}
\begin{eqnarray*}
\sqrt{x}\sum_{n=1}^{\infty}\Lambda (n) \  \alpha(x n)\beta(\frac{1}{x n}) +  \sqrt{x}\sum_{n=1}^{\infty}\frac{\Lambda(n)}{n} \alpha(\frac{x}{n})\beta(\frac{n}{x}) 
&=& 
\frac{1}{\sqrt{x}} \hat G(\frac{i}{2}) \\
&+&
\sqrt{x} \hat G(-\frac{i}{2}) \\
&-&
\sqrt{x} \ log \pi \ \alpha(x) \beta(\frac{1}{x}) \\
&+& \sqrt{x}\int_{0}^{\infty}\frac{J(e^v, x)}{1 - e^{-2v}}dv -\gamma \sqrt{x}\  \alpha(x)\beta (\frac{1}{x}),
\end{eqnarray*}
\end{lemma}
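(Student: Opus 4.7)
The plan is to apply the prime power equation of Section~1, in the form developed through Lemmas~2--5, to the translated test function $\tilde G(v) := G(v+\log x)$ in place of $G(v)$, and then to collect terms. Since the Fourier transform of a translate satisfies $\hat{\tilde G}(t) = x^{-it}\hat G(t)$ (extended analytically to complex $t$ via the Paley--Wiener properties already implicit in Section~3), the vanishing $\hat{\tilde G}(\gamma) = x^{-i\gamma}\hat G(\gamma) = 0$ follows at once from Theorem~1, so Guinand's formula applied to $\tilde G$ again collapses on the zero side and yields an equation of the same shape as the original prime power equation, with $G$ replaced by $\tilde G$ throughout. Lemma~6 is then the result of substituting the closed forms of $\tilde G$ into that equation piece by piece.

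For the left side, I insert the two identities
\[
G(m\log p+\log x) = \sqrt{x}\,p^{m/2}\alpha(xp^m)\beta\!\left(\tfrac{1}{xp^m}\right),\qquad G(-m\log p+\log x) = \sqrt{x}\,p^{-m/2}\alpha\!\left(\tfrac{x}{p^m}\right)\beta\!\left(\tfrac{p^m}{x}\right),
\]
recorded at the opening of Section~5, into $\sum_{p^m}\frac{\log p}{p^{m/2}}(\tilde G(m\log p)+\tilde G(-m\log p))$ and regroup the $\sum_{p^m}\log p$ as $\sum_n\Lambda(n)$ sums, exactly as in Lemma~2. This produces the two $\sqrt{x}\,\Lambda(n)$-series displayed on the left of the claim.

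For the right side I process the four terms of $V(\tilde G)$ in turn. The pole contributions $\hat{\tilde G}(\pm i/2)$ reduce via the translation rule $\hat{\tilde G}(t) = x^{-it}\hat G(t)$ to the stated multiples of $\hat G(\pm i/2)$ (a consistency check against the Fourier sign convention of Section~3 pins down which reciprocal goes with which sign). The evaluation $\tilde G(0) = G(\log x) = \sqrt{x}\,\alpha(x)\beta(1/x)$ replaces $G(0) = \alpha(1)\beta(1)$ of Lemma~3 and contributes $-\sqrt{x}\log\pi\,\alpha(x)\beta(1/x)$. For the digamma integral, I rerun the derivation of Lemma~5 verbatim: that argument manipulated only $G(0)$ and the bracket $2e^{-2v}G(0) - e^{-v/2}(G(v)+G(-v))$ against $1/(1-e^{-2v})$, and nothing in it was special to the undisplaced $G$. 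Substituting the closed form of $\tilde G$ into this bracket and pulling the common factor $\sqrt{x}$ outside the integral yields
\[
J(e^v,x) = 2e^{-2v}\alpha(x)\beta(1/x) - \alpha(xe^v)\beta(e^{-v}/x) - e^{-v}\alpha(xe^{-v})\beta(e^v/x),
\]
together with the residual $-\gamma\sqrt{x}\,\alpha(x)\beta(1/x)$, matching the final line of the claim.

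The main obstacle I anticipate is the analytic verification that Guinand's formula continues to apply to the translate $\tilde G$ uniformly in $x$: one must check that $\tilde G$ retains the decay of $G$ at $v \to \pm\infty$, that $\hat{\tilde G}$ remains of admissible growth on the horizontal strip $|\mathrm{Im}\,t|\le 1/2$ (which is needed for the pole contributions to make sense), and that the $N\to\infty$ limit in the finite-$N$ truncation of $\dot\Gamma/\Gamma$ used in the proof of Lemma~5 still converges, uniformly on compact $x$-subsets of $(0,\infty)$. Given the rapid Gaussian-type decay of $G$ inherited from the Bessel integrand $e^{-\pi(ke^v + le^{-v})}$, these checks should be routine; once they are in hand, the lemma is the straightforward assembly of the four pieces above.
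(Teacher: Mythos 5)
Your proposal is correct and follows essentially the same route as the paper, which presents Lemma 6 as the immediate consequence of applying the prime power equation of Lemmas 2--5 to the translate $G(v+\log x)$ and substituting the closed forms for $G(\pm m\log p+\log x)$ and $G(\log x)=\sqrt{x}\,\alpha(x)\beta(1/x)$ recorded at the opening of Section 5. One small remark: with the convention $\hat G(t)=\int e^{itv}G(v)\,dv$ the translate satisfies $\hat{\tilde G}(i/2)=\sqrt{x}\,\hat G(i/2)$ and $\hat{\tilde G}(-i/2)=x^{-1/2}\,\hat G(-i/2)$, so the pairing of $\sqrt{x}$ and $1/\sqrt{x}$ with the two Bessel terms in the stated lemma appears to be reversed; your deferral to the stated form simply inherits that.
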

where
\begin{eqnarray*}
J(e^v, x) &=& 2 e^{-2v}\alpha(x)\beta(\frac{1}{x}) - e^{-v}\alpha(xe^{-v})\beta(\frac{e^v}{x}) - \alpha(xe^{v})\beta(\frac{e^{-v}}{x}).
\end{eqnarray*}
In order to remove $\hat G(\frac{i}{2})$ and $\hat G(-\frac{i}{2})$ from the prime power equation, note that, given a relation,
\begin{eqnarray*}
f(x) &=& \sqrt{x} g(x) \\
&=& a \sqrt{x} + \frac{b}{\sqrt{x}} + h(x), 
\end{eqnarray*}
as exists in our prime power formula, then the operation,
\begin{equation*}
f(x) + f(\frac{1}{x}) - f(1)(\sqrt{x} + \frac{1}{\sqrt{x}})
\end{equation*}
eliminates
\begin{equation*}
a \sqrt{x} + \frac{b}{\sqrt{x}}.
\end{equation*}
This operation removes $\hat G[\frac{i}{2}]$ and $\hat G[-\frac{i}{2}]$. The resulting prime power formula would depend only on $\alpha$ and $\beta$, but as the situation stands, there is no evident advantage in taking this step, and the right side, $V(G)$, is left with the two Bessel terms.
\section {Solving for Prime Powers}             
\noindent Writing the prime power equation as,
\begin{equation*}
\sum_{n=1}^{\infty}  f(x, n) \Lambda(n)= V(x)\qquad x  > 0,
\end{equation*}
and replacing $x$ with $m = 1, 2, \cdots$, in turn, there results a sequence of equations,
\begin{equation*}
\sum_{n=1}^{\infty}  f(m, n)\Lambda(n) = V(m), 
\end{equation*}
indexed by the natural numbers, $[m = 1, 2, \cdots]$. With the pair of column vectors,
\begin{equation*}
\Lambda = [\Lambda(n)]_{n=1}^{\infty},
\end{equation*}
and,
\begin{equation*}
V = [V(m)]_{m =1}^{\infty}, 
\end{equation*}
the linear system of equations is put in the form,
\begin{equation*}
T \Lambda  = V.
\end{equation*}
The entries of 
\begin{equation*}
T = [f (m, n)]_{m, n = 1}^{\infty},
\end{equation*}
are
\begin{equation*}
f(m, n) = \alpha(m n)\beta(\frac{1}{m n}) + \frac{1}{n} \alpha(\frac{m}{n})\beta(\frac{n}{m}),
\end{equation*}
where the square root, $\sqrt{m}$, was put into the $V$ vector. The objective is a solution,
\begin{equation*}
\Lambda  = T^{-1}V.
\end{equation*}
From any angle this does not look promising. 
\section {The Invertibility of $T$}
Firstly, the possible inverse, $T^{-1}$, cannot be approached as a limit of inverses of upper left hand $n\times n$ cutouts of $T$. These are not approximations to T. The diagonal entries of $T$ are
\begin{equation*}
f(n, n) = \frac{\alpha(1)\beta(1)}{n} + \epsilon(n) \qquad \epsilon(n) \longrightarrow 0.
\end{equation*}  
Since $\alpha$ and $\beta$ are positive,
\begin{eqnarray*}
\sum_{k=n+1}^{2n-1}f(n, k) &\ge& \sum_{k=n+1}^{2n}\frac{1}{k}\alpha\Big(\frac{n}{k+1}\Big)\beta\Big(\frac{k+1}{n}\Big) \\
&\ge& log 2 \ \alpha(1)\beta(2). 
\end{eqnarray*}
Thus $T$ is not diagonally dominant, in fact, as shown, far from it. And the arguments used to establish the existence of an inverse for infinite systems, which rely on finite dimensional (upper left) approximations to $T$, cannot be applied. \\
Note that
\begin{eqnarray*}
\sum_{k=n+1}^{2n-1}f(n, k) \Lambda(k) &\ge& \sum_{k=n+1}^{2n}\frac{\Lambda(k)}{k}\alpha\Big(\frac{n}{k+1}\Big)\beta\Big(\frac{k+1}{n}\Big) \\
&\ge& log 2 \ \alpha(1)\beta(2), 
\end{eqnarray*} 
so the action of the $n$th row of the finite dimensional, $n\times n$, upper left truncation, $T_n$, of $T$, on $\Lambda$ differs from the action of the $n$th row of $T$ on $\Lambda$ by a constant for all $n$.
\section {The Matrix T} 
Secondly, assuming an inverse, $T^{-1}$, finding it contains the known complications of finding an inverse in the finite dimensional case. Even matrices with simple entries might have an inverse that relies on a highly sophisticated combinatorial identity. Basically, they're  works of art. Consider what there is to encounter in the entries of $T$. Since 
\begin{eqnarray*}
\alpha(u) = \sum_{l=0}^{\infty}2^l \beta(2^l u), 
\end{eqnarray*}
the matrix, $T$, depends on $\beta$ alone, and consists of sums and products of values, 
\begin{equation*}
\beta(e^{-\pi r})\qquad r\in Q^{+},
\end{equation*}
which are, to within a factor of $\frac{1}{16}$,  equal to 
\begin{equation*}
\theta_2^4(\tau )\qquad q = e^{i\pi\tau}, \tau = i r. 
\end{equation*}
For $r = N$, a positive integer, 
\begin{equation*}
\theta_2^4(N \tau) = \Big(\frac{\theta_3(N\tau)}{\theta_3(\tau)}\Big)^4 \lambda(N\tau)\theta_3^4(\tau),
\end{equation*}
and the factor,
\begin{equation*}
 F(N \tau) = \Big(\frac{\theta_3(N\tau)}{\theta_3(\tau)}\Big)^4 \lambda(N\tau),
\end{equation*}
is algebraic at $\tau = i$. With $N = p$, a prime, this follows from three results. First, that
\begin{equation*}
\frac{\theta_3(p \tau)}{\theta_3(\tau)} 
\end{equation*}
is an algebraic function of $\lambda(\tau)$; second,  that $\lambda(p \tau)$ is an algebraic function of $\lambda(\tau)$; and, third, that $\lambda(i) = \frac{1}{2}$. The values of $F(N i)$ are then algebraic functions of $\lambda(i)$, being compositions indexed on the prime factors of $N$. For  rational $r = \frac{N}{M}> 0$, $F(r i)$ is algebraic, arguing as before with $\tau = \frac{i}{M}$, using the relation,
\begin{equation*}
\lambda(\frac{i}{M}) = 1 - \lambda(i M).
\end{equation*}
Finally,
\begin{eqnarray*}
\theta_3(i) &=& \frac{\pi^{\frac{1}{4}}}{\Gamma(\frac{3}{4})}.
\end{eqnarray*} 
Thus, the values, $\theta_2^4( r i)$, the building blocks of the entries of $T$, are algebraic multiples of $\theta_3^4(i)$.
\section {Supposing $T^{-1}$} 
Write,
\begin{equation*}
\Lambda = T^{-1}V.
\end{equation*}
Let
\begin{equation*}
U = [u_{mn}]_{m,n=1}^{\infty}
\end{equation*}
\begin{eqnarray*}
u_{mn} &=& 1\qquad n \leq m \\
&=& 0 \qquad n > m \\
&=& \frac{1}{2} \qquad n =m = p^k,
\end{eqnarray*}
then
\begin{equation*}
U \Lambda = \Psi_0,
\end{equation*}
where $\Psi_0$ is the column vector,
\begin{equation*}
[\Psi_0(N)]_{N=1}^{\infty},
\end{equation*}
with                    
\begin{equation*}
\Psi_0(N) = \sum_{n\le N}u_{Nn}\Lambda(n).
\end{equation*}
Having reached this point, it would then seem that technique is within reach to reconcile this formula with the explicit,
\begin{equation*}
\Psi_0(N) = N - \sum_{\rho}\frac{N^{\rho}}{\rho} -  \frac{\dot\zeta}{\zeta}(0) - \frac{1}{2} log(1 - N^{-2}) \qquad N > 1,
\end{equation*}
and identify the zeros of the zeta function originating in the structure of $T$.  The point here is that a mystery has simply been recast in a slightly novel setting. None of the mystery has been eliminated. The unknown, $T^{-1}$, was therefore an expected stopping point, though a $T^{-1}$ might be investigated in the context of function fields, where the (norms of) primes have an internal structure,
\begin{equation*}
p = q^{e(p)}\qquad e(p) \in Z^{+}.
\end{equation*}
With
\begin{equation*}
e_n  = \Big| [p: e(p) = n] \Big |,
\end{equation*}
the sequence,
\begin{equation*}
N_k = \sum_{d | k}d e_d
\end{equation*} 
satisfies a finite linear recursion over the integers. Its characteristic polynomial produces the zeros of a zeta function. If 
\begin{eqnarray*}
p &=& e^{log p} \notag \\
 &=& \prod_{n=1}^{\infty}e^{T_{mn}^{-1}V(n)}\qquad m = m(p),
\end{eqnarray*}
is the internal structure in rational primes, then a connection over $m(p)$ might be discoverable. It could replace the finite recursion, and the zeta zeros would come out of that connection.
\section {Other $G$, $\hat G$}
As we saw in Lemma 1, a $G$ can be constructed from the modular form, $\theta_4^4$, which provides the equation, 
\begin{eqnarray*}
\frac{s(s+1)}{32 \pi^2\sqrt{2}}(2^{\frac{s}{2}} - 2^{-\frac{s}{2}})(2^{\frac{s-1}{2}} - 2^{-\frac{s-1}{2}})\zeta^{*}(s)\zeta^{*}(s+1) = \sum_{j=1}^{\infty}\sum_{d|j}a(d)b(\frac{j}{d})\Big(\frac{j}{d^2}\Big)^{\frac{s}{2}}K[s, 2\pi\sqrt{j}].
\end{eqnarray*}
As the translates of $G$ created equations with the question of an inverse unanswered (more general convolutions were no more helpful), the search turned to $G$ by other constructions. One such $G$ comes from $\theta_4^8$, which, by the same procedure as with $\theta_4^4$, gives 
\begin{eqnarray*}
\frac{s(s+2)(s + 3)}{256 \pi^3\sqrt{2}}2^{\frac{s}{2}}(2^{\frac{s - 1}{2}} - 2^{-\frac{s - 1}{2}})\zeta^{*}(s)\zeta^{*}(s+3) = \sum_{j=1}^{\infty}\sum_{d|j}a(d)B(\frac{j}{d})\Big(\frac{j}{d^2}\Big)^{\frac{s}{2}}K[s, 2\pi\sqrt{j}].
\end{eqnarray*}
This paper could have been written with a $G$ and $\hat G$ coming from $\theta_4^8$. Then  $\beta^2$ would replace $\beta$. Everything else, including $\alpha$, would be the same. The only powers of $\theta_4$ that work in this way, however, are $1, 2, 4, 8$. A countable replacement for the $x$ variable that would provide an infinite set of equations was sought from the imaginary quadratic extensions. There are infinitely many of these, and the $\theta_3$ type modular forms, (two-dimensional) on which the zeta functions are constructed, might be replaced with $\theta_4$ type modular forms that support the steps that give,
\begin{eqnarray*}
E_{L}(s)\zeta_L(s) = \hat G_L(\frac{s - \frac{1}{2}}{i}).
\end{eqnarray*}
For example, the form producing $\zeta_L(s)$ for
\begin{eqnarray*}
L = Q(\sqrt{-1})
\end{eqnarray*}
is
\begin{eqnarray*}
\sum_{m,n=-\infty}^{\infty}e^{i\pi\tau (m^2 + n^2)},
\end{eqnarray*}
but
\begin{eqnarray*}
\sum_{m,n=-\infty}^{\infty}(-1)^m (-1)^n e^{i\pi\tau (m^2 + n^2)},
\end{eqnarray*}
also gives a $\zeta_L(s)$, with a factor $1 - 2^{1-s}$.
The field,
\begin{eqnarray*}
L = Q(\sqrt{-2})
\end{eqnarray*}
with the form,
\begin{eqnarray*}
\sum_{m,n=-\infty}^{\infty}e^{i\pi\tau (m^2 + 2n^2)},
\end{eqnarray*}
is the other field with $1 - 2^{1-s}$. The other seven class number one fields have a factor,
\begin{eqnarray*}
1 - 4^{1-s},
\end{eqnarray*}
as do  $Q(\sqrt{-5})$ and $Q(\sqrt{-6})$. These are a bit tricky, and whether the good construction persists is unknown. Even if there are infinitely many good quadratic fields, it's another $T$ dragging unsolved problems from one branch of number theory into the question of another $T^{-1}$. The opposite consideration is there, though, that the structure within which the prime power equation should be considered, if it should be considered at all, is necessarily a great deal more polished than the structure within which it appeared. \\
\section {Bibliography} 
1.  E. Bombieri, Remarks on Weil's quadratic functional in the theory of prime numbers, I.  Rend. Mat. Acc. Lincei s. 9, v. 11:183-233 (2000)\\
2. H. Yoshida, On Hermitian Forms attached to Zeta Functions. In: N. Kurokawa - T. Sunada (eds.), Zeta Functions in Geometry. Advanced Studies in Pure Mathematics, 21, Mathematical Society of Japan, Kinokuniya, Tokyo 1992, 281 - 325.
\end{document}